\definecolor{dblue}{rgb}{0,0,0.70}
\newtheorem{theorem}{Theorem}[section]
\newtheorem{lemma}[theorem]{Lemma}
\newtheorem{remark}[theorem]{Remark}
\newtheorem{corollary}[theorem]{Corollary}
\newtheorem{definition}[theorem]{Definition}
\newtheorem{question}[theorem]{Question}
\newtheorem*{lemma*}{Lemma}
\newtheorem*{theorem*}{Theorem}
\renewcommand{\restriction}{\mathbin\upharpoonright}
\newcommand{\axiom}[1]{\mathsf{#1}}
\newcommand{\ZFC}{\axiom{ZFC}}
\newcommand{\AC}{\axiom{AC}}
\newcommand{\DC}{\axiom{DC}}
\newcommand{\ZF}{\axiom{ZF}}
\newcommand{\GCH}{\axiom{GCH}}
\newcommand{\HS}{\axiom{HS}}
\DeclareMathOperator{\supp}{supp}
\DeclareMathOperator{\sym}{sym}
\DeclareMathOperator{\fix}{fix}
\DeclareMathOperator{\id}{id}
\DeclareMathOperator{\aut}{Aut}
\DeclareMathOperator{\Add}{Add}
\DeclareMathOperator{\crit}{crit}
\DeclareMathOperator{\Ult}{Ult}
\newcommand{\forces}{\mathrel{\Vdash}}
\newcommand{\PP}{\mathbb P}
\newcommand{\QQ}{\mathbb Q}
\newcommand{\RR}{\mathbb R}
\newcommand{\cC}{\mathcal C}
\newcommand{\cS}{\mathcal S}
\newcommand{\sF}{\mathscr F}
\newcommand{\sG}{\mathscr G}
\newcommand{\sH}{\mathscr H}
\newcommand{\1}{\mathds 1}
\newcommand{\tup}[1]{\langle#1\rangle}
\author{Asaf Karagila}
\author{Jiachen Yuan}
\email{karagila@math.huji.ac.il}
\urladdr{https://karagila.org}
\email{j.yuan@leeds.ac.uk}
\address{School of Mathematics,
    University of Leeds.
    Leeds, LS2~9JT, UK}
\thanks{The authors were supported by a UKRI Future Leaders Fellowship [MR/T021705/2].}
\date{30 July, 2026}
\subjclass[2020]{Primary 03E55; Secondary 03E25, 03E35}
\keywords{critical cardinals, elementary embeddings, axiom of choice, symmetric extensions}
\title{Critical embeddings}
\begin{document}
\begin{abstract}
  Hayut and the first author isolated the notion of a \emph{critical cardinal} in \cite{KaragilaHayut:Critical}. In this work, we answer several questions raised in the original paper. We show that it is consistent for a critical cardinal not to have any ultrapower elementary embeddings, as well as that it is consistent that no target model is closed. We also prove that if $\kappa$ is a critical point by any ultrapower embedding, then it is the critical point of an ultrapower embedding by a normal measure. The paper concludes by presenting several open questions of interest in the study of critical cardinals.
\end{abstract}
\maketitle              
\section{Introduction}
Recall that a cardinal $\kappa$ is a \textit{measurable cardinal} if it carries a $\kappa$-complete free ultrafilter, which we will refer to as a ``measure on $\kappa$''. In the context of $\ZFC$, this is equivalent to the existence of a non-trivial elementary embedding from $V_{\kappa+1}$,\footnote{We only care about elementary embeddings, and so from this point onwards ``embedding'' will always mean ``\emph{elementary embedding}''.} or even $V$ itself, into a transitive set or class, such that the least ordinal moved by the embedding is $\kappa$, in which case we say that $\kappa$ is the \emph{critical point} of the embedding.

In the context of $\ZF$, however, $\omega_1$ can be a measurable cardinal, as shown by Jech \cite{Jech:1968}, but it is not hard to show that $\omega_1$ can never be the critical point of an elementary embedding from $V_{\omega_1+1}$ into a transitive set. When $\kappa$ is the critical point of an embedding from $V_{\kappa+1}$ into a transitive set we say that it is a \emph{critical cardinal}. The basic theory of critical cardinals in $\ZF$ is very similar to the basic theory of measurable cardinals in $\ZFC$, and this was studied in \cite{KaragilaHayut:Critical}.

However, despite this, some questions remained open. For example, if $\kappa$ is a critical cardinal, assuming $\ZFC$, we can derive an ultrapower embedding by a normal measure, which will always be $\kappa^+$-closed.\footnote{That is, the target model is closed under sequences of any length below $\kappa^+$ itself.} Can we always do the same in $\ZF$? Are there always ultrapower embeddings, given a critical cardinal? In this work, we answer both of these in the negative.

\section{Preliminaries}
We follow the standard definitions of forcing (see \cite{Jech:ST2003} for a comprehensive exposition), where a notion of forcing is a preordered set, $\PP$, with a maximum $\1$. The elements of $\PP$ are called \emph{conditions}, and if $q\leq p$, we say that $q$ \emph{extends} $p$ or that it is a \emph{stronger} condition. We will use $\dot x$ to denote $\PP$-names, and given a collection of $\PP$-names in the ground model, $X$, we define the name $X^\bullet=\{\tup{\1,\dot x}\mid \dot x\in X\}$. This notation extends naturally to ordered pairs and sequences which are indexed by ground model sets. This allows us to neatly define the canonical names for ground model sets, $\check x=\{\check y\mid y\in x\}^\bullet$.

As we are working in $\ZF$, not all sets can necessarily be well-ordered. However, in the context of this work we will not discuss cardinals and cardinalities in general. It is therefore sufficient to remark that Greek letters, when denoting cardinals, will always refer to cardinals of well-orderable sets, which are just the corresponding initial ordinals.

\subsection{Symmetric extensions}
One of the features of forcing is that a generic extension of a model satisfying $\AC$ will always satisfy $\AC$. This is a problem if we want to prove consistency results involving the failure of the Axiom of Choice. For this purpose, the technique of symmetric extension was developed. Let $\PP$ be a notion of forcing. If $\pi\in\aut(\PP)$, then $\pi$ acts on the $\PP$-names, defined recursively \[\pi\dot x=\{\tup{\pi p,\pi\dot y}\mid\tup{p,\dot y}\in\dot x\}.\]
The following is known as \emph{The Symmetry Lemma}.
\begin{lemma}[{\cite[Lemma~14.37]{Jech:ST2003}}]
For any $p\in\PP$, $\pi\in\aut(\PP)$, $\dot x$, and formula $\varphi$, \[p\forces\varphi(\dot x)\iff\pi p\forces\varphi(\pi\dot x).\]
\end{lemma}

Let us fix a group $\sG\subseteq\aut(\PP)$. We say that $\sF$ is a \emph{filter of subgroups} on $\sG$ if it is a non-empty collection of subgroups of $\sG$ which is closed under supergroups and finite intersections. We say that $\sF$ is a \emph{normal filter} if whenever $H\in\sF$ and $\pi\in\sG$, $\pi H\pi^{-1}\in\sF$.\footnote{This terminology somewhat clashes with the notions of normal filters and ultrafilters, which is why we refer to the ultrafilters as ``measures''.}

We say that $\tup{\PP,\sG,\sF}$ is a \emph{symmetric system} if $\PP$ is a notion of forcing, $\sG$ is a subgroup of $\aut(\PP)$, and $\sF$ is a normal filter of subgroups on $\sG$. A $\PP$-name, $\dot x$, is \emph{$\sF$-symmetric} if $\sym_\sG(\dot x)=\{\pi\in\sG\mid\pi\dot x=\dot x\}\in\sF$. If this property holds hereditarily to all the names appearing in $\dot x$ and so on, we say that $\dot x$ is \emph{hereditarily $\sF$-symmetric}, and we denote the class of all hereditarily $\sF$-symmetric names by $\HS_\sF$. When the context is clear, we will omit the subscripts if possible.

\begin{theorem}[{\cite[Lemma~15.51]{Jech:ST2003}}]
  Let $G\subseteq\PP$ be a $V$-generic filter. The class $\HS_\sF^G=\{\dot x^G\mid\dot x\in\HS_\sF\}$ is a transitive class of $V[G]$, contains $V$, and satisfies $\ZF$.
\end{theorem}

We call such an intermediate model a \emph{symmetric extension} of $V$. We have a forcing relation, $\forces^\HS$, defined by relativising $\forces$ to the class $\HS_\sF$, which satisfies the Forcing Theorem,\footnote{Namely, something is true if and only if some condition in the generic forces it to be true.} as well as a version of The Symmetry Lemma when $\pi\in\sG$.

The study of symmetric extensions may be of interest to the readers. We suggest \cite{Usuba:LS,Usuba:GeologySym,KS:KWP} as starting points, as well as \cite{Karagila:2019} and \cite{KS:Sym}, for a generalised framework for iterations and extensions of the basic technique.

\subsection{Lifting theorem}
In the seminal \cite{KaragilaHayut:Critical}, a basic lifting theorem gives a basic condition for lifting an elementary embedding to symmetric extensions. Crucially, the embedding was not just lifted between the symmetric extensions, but it was also amenable to the symmetric extension. That is to say, the lifted embedding is itself symmetric. In other words, the basic lifting theorem allowed us to preserve the property ``$\kappa$ is a critical cardinal''. In \cite[Theorem~2.13]{KaragilaHayut:Small}, a small (but essential) improvement was made, by observation that the original proof provides a better theorem. In this paper we will need only a particular case of this improved theorem.
\begin{theorem}[Simplified lifting theorem]\label{thm:lifting-local}
  Suppose that $j\colon V\to M$ is an elementary embedding and $\tup{\PP,\sG,\sF}$ is a symmetric system such that the following conditions hold in $V$:
  \begin{enumerate}
  \item $j(\PP)\cong\PP\times\RR$ with $j(p)=\tup{p,\1_\RR}$, and there is an $M$-generic filter for $\RR$.
  \item $j(\sG)\cong\sG\times\sH$ with $j(\sigma)=\tup{\sigma,\id}$.
  \item For every $K\in j(\sF)$, $\{\sigma\in\sG\mid j(\sigma)\in K\}\in\sF$.
  \end{enumerate}
  Then we can lift $j$ amenably to the symmetric extensions of $V$ and $M$ by $\cS$ and $j(\cS)$ respectively.
\end{theorem}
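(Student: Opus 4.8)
The plan is to derive the simplified lifting theorem \autoref{thm:lifting-local} as a special case of the general lifting theorem \autoref{thm:lifting}, by checking that each hypothesis of the latter follows from the three numbered hypotheses of the former under the assumption $j(\PP)\cong\PP\times\RR$. Since a product $\PP\times\RR$ is a degenerate instance of a two-step iteration $\PP\ast\dot\QQ$ (namely the one where the name $\dot\QQ$ is the check-name $\check\RR$, which is automatically fixed by every automorphism in $\sG$), most of the work is translating the ``absolute'' objects $\RR$, $\sH$, and the $M$-generic filter in $V$ into the ``names over $\PP$'' that \autoref{thm:lifting} demands.

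First I would set $m=\1_{j(\PP)}$ so that $j(\PP)\restriction m = j(\PP)$, and let $\dot\QQ=\check\RR$ and $\pi$ be the given isomorphism $j(\PP)\cong\PP\times\RR$; clause (1)(a) is then immediate, and clause (1)(b) holds because a check-name is fixed by all of $\sG$, so $\sym(\dot\QQ)=\sG$. For clause (1)(c), I take $\dot H$ to be the check-name of the $M$-generic filter for $\RR$ promised to exist in $V$ by hypothesis (1); since $\RR$ is a ground-model partial order with trivial symmetry, being symmetrically $M$-generic reduces to being $M$-generic in the usual sense, so this is exactly what hypothesis (1) gives. Next, for part (2), I let $\dot\sH$ be the check-name of the group $\sH$ appearing in hypothesis (2), so that $\sym(\dot\sH)=\sG$ and $\forces_\PP \dot\sH\subseteq\aut(\check\RR)$. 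The embedding $\tau\colon\sG\to\sG\ast\dot\sH$ is just the map $\sigma\mapsto\tup{\sigma,\id}$ read off from the isomorphism $j(\sG)\cong\sG\times\sH$ in hypothesis (2), and since this has trivial second coordinate, the name $\dot\rho$ in clause (2)(c) is the (check-name of the) identity automorphism of $\RR$, which trivially satisfies $\forces_\PP\dot\rho``\dot H=\dot H$. Finally, clause (3) of \autoref{thm:lifting} is verbatim hypothesis (3) of \autoref{thm:lifting-local}.

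Having verified all hypotheses, \autoref{thm:lifting} yields that $j$ lifts amenably to the symmetric extension given by $\cS$; the only remaining point is the slightly stronger phrasing in the conclusion of \autoref{thm:lifting-local}, namely that $j$ lifts to an embedding \emph{between} the symmetric extension of $V$ by $\cS$ and the symmetric extension of $M$ by $j(\cS)$. This follows because $j$ is elementary, so $j(\cS)=\tup{j(\PP),j(\sG),j(\sF)}$ is a symmetric system in $M$, and the amenable lift $\hat\jmath$ produced by \autoref{thm:lifting} sends hereditarily $\sF$-symmetric names for $V$ to hereditarily $j(\sF)$-symmetric names for $M$ and commutes with evaluation at the respective generic filters $G$ and $\pi``(G\ast H)$; one then checks that the target model $\hat\jmath$ lands in is precisely $\HS_{j(\sF)}$ evaluated at the induced $M$-generic, which is the symmetric extension of $M$ by $j(\cS)$.

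The main obstacle I anticipate is purely bookkeeping: making sure the identification of $\PP\times\RR$ with the iteration $\PP\ast\check\RR$ is carried out compatibly with the automorphism actions, so that the ``$\tau$ is given by applying $\pi$ to $j(\sigma)$'' clause (2)(b) genuinely holds rather than merely holding up to the isomorphism. Concretely, one must check that under the identification $\pi$, the automorphism $j(\sigma)$ of $j(\PP)$ corresponds to the automorphism $(\sigma,\id)$ of $\PP\times\RR$, which is exactly what hypothesis (2) with $j(\sigma)=\tup{\sigma,\id}$ asserts, so in fact even this reduces to an unwinding of definitions. No genuinely hard step arises, which is the point of stating the simplified version: all the analytic content has been absorbed into \autoref{thm:lifting}, and here we only specialize.
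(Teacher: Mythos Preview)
Your proposal is correct and matches the paper's approach: the paper states \autoref{thm:lifting-local} immediately after \autoref{thm:lifting} as a simplification, without giving any proof, so the intended argument is exactly the specialization you carry out (taking $\dot\QQ=\check\RR$, $\dot H$ and $\dot\sH$ as check-names, and $\dot\rho=\id$). You have supplied the routine verification that the paper leaves implicit.
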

We sketch the proof, and the complete proof can be found in \cite[\S4]{KaragilaHayut:Critical}.
\begin{proof}
  Suppose that the conditions hold. For $\dot x\in\HS_{j(\sF)}$ we define a partial interpretation function by $H$, the $M$-generic filter for $\RR$. Since $H\in V$, this is a well-defined function in $V$. The definition is recursive, \[\dot x^H=\{\tup{p,\dot y^H}\mid\tup{\tup{p,r},\dot y}\in\dot x, r\in H\},\] where we identify the conditions in $j(\PP)$ with $\PP\times\RR$.

  For any $\pi\in\sG$ and $V$-generic $G\subseteq\PP$ we have that $(\pi(\dot x^H))^G=(j(\pi)\dot x)^H$. And so, if $\dot x\in\HS_{j(\sF)}$, then $\dot x^H\in\HS_\sF$. In particular, if $\dot a\in\HS_{\sF}$, then $j(\dot a)^H\in\HS_\sF$ as well.

  As a consequence, $\{\tup{\dot x,j(\dot x)^H}^\bullet\mid\dot x\in\HS\}^\bullet$ is stable under the action of $\sG$, and provides us with the lifted embedding.
\end{proof}
Note that assuming the second condition, the third condition is equivalent to $j``(K\cap\sG)\subseteq K$, where $K\cap\sG$ is taken as the projection onto the $\sG$-coordinate.
\subsection{Choice in measure}
\begin{definition}
  Let $U$ be a family of subsets of $I$. We say that $U$-$\AC_I$ holds if whenever $\{A_i\mid i\in I\}$ is a family of non-empty sets, there is some $J\in U$ such that $\{A_i\mid i\in J\}$ admits a choice function. Namely, $\prod_{i\in J}A_i\neq\varnothing$.
\end{definition}
Normally we will require $I$ to be some $\kappa$ and $U$ to be a measure on $\kappa$. In the following theorem, note that we do not require that $\kappa$ is measurable or that $U$ is $\sigma$-complete.

\begin{theorem}[Spector \cite{Spector:1988}]\label{thm:Spector}
  Suppose that $U$ is an ultrafilter on $\kappa$. The following are equivalent:
  \begin{enumerate}
  \item $j\colon V\to\Ult(V,U)$ is elementary.
  \item $\Ult(V,U)$ satisfies the Axiom of Extensionality.
  \item $\Ult(V,U)\models\forall x(x\notin[f]_U)$ if and only if $[f]_U=[c_\varnothing]_U$.
  \item $U$-$\AC_\kappa$ holds.
  \end{enumerate}
\end{theorem}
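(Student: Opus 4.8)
The plan is to establish the cycle $(4)\Rightarrow(1)\Rightarrow(2)\Rightarrow(3)\Rightarrow(4)$. The only implication carrying real content is $(4)\Rightarrow(1)$; the other three are short manipulations of the membership relation $\E$ and the equality relation $=_U$ on the ultrapower (recall that in $\ZF$ one uses Scott's trick so that each class $[f]_U$ is a genuine set, but this plays no role in the arguments below).

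For $(4)\Rightarrow(1)$ I would prove the full \Los\ theorem: for every formula $\varphi(v_1,\dots,v_n)$ and all $f_1,\dots,f_n\in{}^\kappa V$,
\[\Ult(V,U)\models\varphi([f_1]_U,\dots,[f_n]_U)\iff\{\alpha<\kappa\mid V\models\varphi(f_1(\alpha),\dots,f_n(\alpha))\}\in U,\]
by induction on the complexity of $\varphi$. The atomic cases are just the definitions of $\E$ and $=_U$; the conjunction case uses that $U$ is a filter; the negation case uses that $U$ is an ultrafilter. The only step using choice is the existential quantifier: supposing $A=\{\alpha\mid V\models\exists y\,\psi(y,f_1(\alpha),\dots,f_n(\alpha))\}\in U$, put $B_\alpha=\{y\mid V\models\psi(y,f_1(\alpha),\dots,f_n(\alpha))\}$ for $\alpha\in A$ and $B_\alpha=\{\varnothing\}$ otherwise, so that $\{B_\alpha\mid\alpha<\kappa\}$ is a $\kappa$-indexed family of non-empty sets. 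By $U$-$\AC_\kappa$ there are $J\in U$ and a function $g$ with $g(\alpha)\in B_\alpha$ for all $\alpha\in J$; then $\{\alpha\mid V\models\psi(g(\alpha),f_1(\alpha),\dots,f_n(\alpha))\}\supseteq A\cap J\in U$, so by the induction hypothesis $[g]_U$ witnesses the existential in $\Ult(V,U)$. The reverse direction of this case needs no choice, since $\{\alpha\mid\psi(g(\alpha),\dots)\}\subseteq\{\alpha\mid\exists y\,\psi(y,\dots)\}$. Once \Los\ holds, elementarity of $j\colon x\mapsto[c_x]_U$ is immediate, because for a constant tuple the defining set is either $\kappa$ or $\varnothing$.

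The implication $(1)\Rightarrow(2)$ is trivial: the Axiom of Extensionality is a sentence holding in $V$, hence in $\Ult(V,U)$ by elementarity of $j$. For $(2)\Rightarrow(3)$, observe that $[c_\varnothing]_U$ never has an $\E$-predecessor, since $[g]_U\E[c_\varnothing]_U$ would give $\varnothing\in U$; thus if $[f]_U$ also has no $\E$-predecessor, then $[f]_U$ and $[c_\varnothing]_U$ vacuously have the same $\E$-elements, so $[f]_U=[c_\varnothing]_U$ by Extensionality, while the converse direction of (3) is precisely the same remark about $[c_\varnothing]_U$. For $(3)\Rightarrow(4)$, let $\{A_\alpha\mid\alpha<\kappa\}$ be a family of non-empty sets and set $f(\alpha)=A_\alpha$; then $\{\alpha\mid f(\alpha)=\varnothing\}=\varnothing\notin U$, so $[f]_U\neq[c_\varnothing]_U$, and by (3) there is $g$ with $[g]_U\E[f]_U$, i.e.\ $J=\{\alpha\mid g(\alpha)\in A_\alpha\}\in U$; then $g\restriction J$ is a choice function for $\{A_\alpha\mid\alpha\in J\}$.

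I expect the only genuine obstacle to be the existential step of the \Los\ induction, and the hypothesis $U$-$\AC_\kappa$ is tailored exactly to resolve it by producing a single $J\in U$ on which all the needed witnesses can be chosen simultaneously; the remainder is bookkeeping (padding the family $\{B_\alpha\}$, checking that $\E$ and $=_U$ are independent of representatives, and verifying the behaviour of the defining sets under the Boolean connectives and the ultrafilter axiom).
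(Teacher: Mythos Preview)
The paper does not supply a proof of this theorem; it is attributed to Spector and merely quoted, with the remark that Spector's original statement contains further equivalent clauses. Your argument is correct and is the standard one: the cycle $(4)\Rightarrow(1)\Rightarrow(2)\Rightarrow(3)\Rightarrow(4)$, with the \Los\ induction carrying the weight in $(4)\Rightarrow(1)$ and $U$-$\AC_\kappa$ supplying exactly the Skolem witnesses needed at the existential step. The only cosmetic point is that the choice function $g$ you obtain is a priori defined only on $J$, so to form $[g]_U$ you should extend it (e.g.\ by $\varnothing$) to all of $\kappa$; this does not affect the conclusion since $A\cap J\in U$.
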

The original theorem is formulated with a few more equivalences, but these are arguably reformulations of (1). One noteworthy remark is that we are not even assuming that $M$ is well-founded, so the theorem also holds for ill-founded ultrapowers. Note that the ultrapower embedding is well-defined without any choice. We will call these ``ultrapower maps'' to suggest that they might not be elementary.

We will use this theorem to show that amongst other things, $U$-$\AC_\kappa$ does not even imply $\AC_\omega$. This is quite counterintuitive at first, but from the perspective of a Levy--Solovay phenomenon this is in fact quite natural. It means that we have ``wiggle room'' to introduce small failures of choice, as long as we preserve choice modulo $U$.

As a consequence of Jech's work in \cite{Jech:1968} (see also Theorem~2.7 in \cite{KaragilaHayut:Small}), we have that if $\kappa$ is measurable and $\cS\in V_\kappa$ is a symmetric system, then any measure on $\kappa$ extends uniquely to a measure in the symmetric extension. Additionally, the lifting theorem shows that the elementary embeddings lift as well to the symmetric extension, and indeed the ultrapower embeddings remain ultrapower embeddings. Since we can violate $\AC_\omega$ by adding countably many Cohen reals,\footnote{Equivalently, a single Cohen real.} with the most famous construction being Cohen's first model (see Chapter~5 in \cite{Jech:AC}), we immediately get the following corollary.
\begin{corollary}
  If $\kappa$ is a measurable cardinal and $U$ is a measure on $\kappa$, then $U$-$\AC_\kappa$ does not imply $\AC_\omega$.
\end{corollary}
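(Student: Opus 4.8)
The plan is to build the basic Cohen model over a measurable cardinal in such a way that the measure and its ultrapower embedding survive into the symmetric extension, and then to read off $U$-$\AC_\kappa$ from the survival of the embedding using Spector's theorem. Work in a model of $\ZFC$ with a measurable cardinal $\kappa$ and a measure $U$ on it. Let $\PP=\Add(\omega,\omega)$ be the forcing adding $\omega$ Cohen reals $\langle a_n\mid n<\omega\rangle$ with finite conditions, let $\sG$ be the group of all permutations of $\omega$ acting on $\PP$ by permuting the coordinates, and let $\sF$ be the normal filter of subgroups of $\sG$ generated by the pointwise stabilisers $\fix(E)$ of finite sets $E\subseteq\omega$. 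All three components are built over $\omega$, so $\cS=\tup{\PP,\sG,\sF}\in V_\kappa$; let $M$ be the associated symmetric extension. The first step is to recall the classical computation that in $M$ the set $A=\set{a_n\mid n<\omega}$ is infinite but Dedekind-finite (no finite support can pin down infinitely many of the $a_n$), whence $\AC_\omega$ fails in $M$; this is precisely why this construction is the standard witness to $\neg\AC_\omega$.

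Next I would feed $\cS$ into the simplified lifting theorem \autoref{thm:lifting-local} with $j=j_U\colon V\to\Ult(V,U)$ the ultrapower embedding. Since $\crit(j)=\kappa$ and $\cS\in V_\kappa$, we have $j\restriction V_\kappa=\id$, so $j(\PP)=\PP$, $j(\sG)=\sG$, $j(\sF)=\sF$ and $j(\cS)=\cS$. Hence the three hypotheses hold for trivial reasons: in (1) take $\RR$ to be the trivial forcing, whose unique generic filter lies in $V$; in (2) take $\sH$ trivial; and (3) holds because for any $K\in j(\sF)=\sF$ we have $\set{\sigma\in\sG\mid j(\sigma)\in K}=K\in\sF$. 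Therefore $j$ lifts amenably to an elementary embedding $\bar\jmath\colon M\to M'$, where $M'$ is the symmetric extension of $\Ult(V,U)$ by $j(\cS)=\cS$.

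Finally I would invoke the two facts recalled just before the corollary. By Jech's theorem, $U$ extends uniquely to a ($\kappa$-complete) measure $\bar U$ on $\kappa$ in $M$, so $\kappa$ is still measurable in $M$; and since $j$ was an ultrapower embedding, its lift $\bar\jmath$ is again an ultrapower embedding, namely $\bar\jmath\colon M\to\Ult(M,\bar U)$. In particular $\bar\jmath$ is elementary, so applying Spector's theorem inside $M$ (the implication $(1)\Rightarrow(4)$) gives that $\bar U$-$\AC_\kappa$ holds in $M$. Since $\AC_\omega$ fails in $M$, the model $M$ witnesses that $U$-$\AC_\kappa$ does not imply $\AC_\omega$.

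Beyond assembling these ingredients, the only points that require an argument are (a) verifying that $A$ is Dedekind-finite in $M$, which is the standard symmetry and genericity computation for the Cohen model, and (b) checking that the lifted embedding $\bar\jmath$ really is the ultrapower by the extended measure $\bar U$ rather than merely some elementary embedding; but (b) is already contained in the cited fact that ultrapower embeddings lift to ultrapower embeddings, so there is in effect no substantial obstacle here and the content of the corollary lies entirely in the combination of the preceding results.
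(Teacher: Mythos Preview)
Your proof is correct and follows essentially the same approach as the paper: take a small symmetric system $\cS\in V_\kappa$ (the Cohen model), observe that $j(\cS)=\cS$ so the lifting theorem applies trivially, cite Jech's result that the measure extends and that ultrapower embeddings lift to ultrapower embeddings, and then read off $U$-$\AC_\kappa$ via Spector's theorem while $\AC_\omega$ fails. The paper's argument is the paragraph immediately preceding the corollary and is merely a terser version of what you wrote; your verification of the hypotheses of \autoref{thm:lifting-local} and the explicit appeal to Spector's $(1)\Rightarrow(4)$ direction simply fill in the details.
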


Recall that if $U_0$ and $U_1$ are two ultrafilters on $\kappa$, then $U_0\leq_{RK} U_1$ if and only if there is some $r\colon\kappa\to\kappa$ such that $A\in U_0\iff r^{-1}(A)\in U_1$.

\begin{theorem}
  Suppose that $U_0\leq_{RK}U_1$ are two ultrafilters on $\kappa$. Then $U_1$-$\AC_\kappa$ implies $U_0$-$\AC_\kappa$.
\end{theorem}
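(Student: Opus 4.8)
The plan is to transport the given family back and forth along the Rudin--Keisler map $r$, using the canonical well-ordering of $\kappa$ to avoid any hidden appeal to choice. Fix $r\colon\kappa\to\kappa$ witnessing $U_0\leq_{RK}U_1$, so that $A\in U_0\iff r^{-1}(A)\in U_1$.

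First I would start from an arbitrary family $\{B_i\mid i\in\kappa\}$ of non-empty sets, and pull it back by setting $A_i=B_{r(i)}$ for each $i\in\kappa$. This is again a $\kappa$-indexed family of non-empty sets, so $U_1$-$\AC_\kappa$ yields some $J\in U_1$ together with a choice function $f$ for $\{A_i\mid i\in J\}$, i.e. $f(i)\in B_{r(i)}$ for every $i\in J$. Next, put $J'=r``J$; since $J\subseteq r^{-1}(J')$ and $J\in U_1$, we get $r^{-1}(J')\in U_1$, hence $J'\in U_0$.

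Finally I would push $f$ forward to a choice function on $\{B_j\mid j\in J'\}$. For each $j\in J'$ the set $\{i\in J\mid r(i)=j\}$ is non-empty, and it has a least element $i_j$ because $\kappa$ is an ordinal; defining $g(j)=f(i_j)$ then gives $g(j)\in B_{r(i_j)}=B_j$, so $g$ is as required and $U_0$-$\AC_\kappa$ holds. The only point needing care — and the one where a careless argument would slip in the axiom of choice — is the selection of preimages under $r$, which is precisely why we use the well-ordering of $\kappa$ to take the least $i_j$; all the remaining steps are routine bookkeeping with the definitions.
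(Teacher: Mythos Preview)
Your proof is correct and follows essentially the same approach as the paper's: pull the family back along the Rudin--Keisler map $r$, apply $U_1$-$\AC_\kappa$, push the resulting choice function forward by selecting the least preimage in $\kappa$, and check that the image set lies in $U_0$ via $J\subseteq r^{-1}(r``J)$. The only differences are notational, and your explicit remark about using the well-order of $\kappa$ to avoid choice is a welcome clarification.
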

\begin{proof}
  Let $r\colon\kappa\to\kappa$ be such that $A\in U_0\iff r^{-1}(A)\in U_1$. Suppose that $f$ is a function with domain $\kappa$ and $f(\alpha)\neq\varnothing$ for all $\alpha$. We want to show there is some $U_0$-positive set on which $f$ admits a choice function. Let $F=f\circ r$ and let $H$ be a function such that $B=\{\beta<\kappa\mid H(\beta)\in F(\beta)\}\in U_1$, which exists by $U_1$-$\AC_\kappa$.

  For $\alpha<\kappa$, let $\alpha_*=\min\{\beta\in B\mid r(\beta)=\alpha\}$ when it is defined. Note that if $\beta\in B$, then $r(\beta)=r(r(\beta)_*)$, so in particular \[F(\beta)=f(r(\beta))=f(r(r(\beta_*)))=F(r(\beta)_*).\] Now, letting $h(\alpha)=H(\alpha_*)$, we claim that $C=\{\alpha<\kappa\mid h(\alpha)\in f(\alpha)\}\in U_0$. It is enough to show that $r``B\subseteq C$, as this would imply that $B\subseteq r^{-1}(C)$, so $r^{-1}(C)\in U_1$, which implies $C\in U_0$ as wanted.

  Suppose that $\beta\in B$ and let $\alpha=r(\beta)$. By the above argument, $\alpha_*\in B$. And so, \[h(\alpha)=H(\alpha_*)=H(r(\beta)_*)\in F(r(\beta)_*)=F(\beta)=f(r(\beta))=f(\alpha).\qedhere\]
\end{proof}
\begin{corollary}
  Suppose that $\kappa$ is a measurable cardinal, $U$ is a measure on $\kappa$, and $U$-$\AC_\kappa$ holds. Let $D$ be the normal measure derived from $j_U$. Then $D$-$\AC_\kappa$ holds.
\end{corollary}
\begin{proof}
  Let $r\colon\kappa\to\kappa$ be a function representing $\kappa$ in $\Ult(V,U)$. Then $r$ witnesses that $D\leq_{RK}U$, so by the above theorem $D$-$\AC_\kappa$ holds.
\end{proof}
\begin{corollary}
  Suppose that $U$ is a measure on $\kappa$ such that $j_U\colon V\to\Ult(V,U)$ is elementary, and let $D$ be the normal measure derived from $j_U$. Then the ultrapower embedding $j_D\colon V\to\Ult(V,D)$ is elementary and $j_U$ factors through it.
\end{corollary}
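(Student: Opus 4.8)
The plan is to get the whole statement almost for free from the three ingredients already on the table: Spector's theorem, the Rudin--Keisler monotonicity of choice-in-measure just proved, and the standard factoring map associated to a Rudin--Keisler projection.

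First I would record that, $D$ being the normal measure derived from $j_U$, it lies Rudin--Keisler below $U$: taking $r\colon\kappa\to\kappa$ to be the function with $[r]_U=\kappa$, as discussed above, one has
\[
  A\in D\iff r^{-1}(A)\in U ,
\]
so $D\leq_{RK}U$ with witness $r$. Since $j_U$ is elementary, Spector's theorem yields $U$-$\AC_\kappa$; the previous theorem, applied to $D\leq_{RK}U$, then yields $D$-$\AC_\kappa$; and Spector's theorem, applied this time to $D$, tells us that $j_D\colon V\to\Ult(V,D)$ is elementary. This is the first assertion.

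For the factoring I would set $k\colon\Ult(V,D)\to\Ult(V,U)$ by $k([f]_D)=[f\circ r]_U$; this is total since $f\circ r\colon\kappa\to V$ whenever $f\colon\kappa\to V$. Both $U$-$\AC_\kappa$ and $D$-$\AC_\kappa$ hold, which is exactly what is needed for \Los's theorem in the two ultrapowers, so for every formula $\varphi$ and all $f_1,\dots,f_n\colon\kappa\to V$, applying $A\in D\iff r^{-1}(A)\in U$ to the set $A=\{\alpha<\kappa\mid\varphi(f_1(\alpha),\dots,f_n(\alpha))\}$ (whose $r$-preimage is exactly $\{\beta<\kappa\mid\varphi(f_1(r(\beta)),\dots,f_n(r(\beta)))\}$) shows that $\Ult(V,D)\models\varphi([f_1]_D,\dots,[f_n]_D)$ holds if and only if $\Ult(V,U)\models\varphi([f_1\circ r]_U,\dots,[f_n\circ r]_U)$. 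Taking $\varphi$ to be equality and membership shows that $k$ is well-defined and an $\E$-embedding, and for arbitrary $\varphi$ this is the full elementarity of $k$. Finally $c_x\circ r=c_x$ for every $x$, so $k(j_D(x))=k([c_x]_D)=[c_x]_U=j_U(x)$, that is, $k\circ j_D=j_U$, as required.

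The whole argument is bookkeeping once the earlier theorems are in place, and the only delicate point is the one flagged above: in this choiceless and possibly ill-founded setting the \Los\ equivalence ``$\Ult(V,U)\models\varphi([f]_U)\iff\{\alpha\mid\varphi(f(\alpha))\}\in U$'' is not automatic for quantified $\varphi$, and it is precisely the choice-in-measure hypotheses $U$-$\AC_\kappa$ and $D$-$\AC_\kappa$---both of which we have just established---that make the induction through the quantifiers go through, so that the displayed computation is legitimate.
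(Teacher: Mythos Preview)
Your proof is correct and follows essentially the same route as the paper: choose $r$ with $[r]_U=\kappa$, verify $A\in D\iff r^{-1}(A)\in U$ so that $D\leq_{RK}U$, apply the previous theorem to obtain $D$-$\AC_\kappa$, and define the factor map $k([f]_D)=[f\circ r]_U$. If anything you are more careful than the paper, which simply asserts that $k$ is elementary and $j_U=k\circ j_D$ as ``not hard to show'', whereas you spell out that it is precisely $U$-$\AC_\kappa$ and $D$-$\AC_\kappa$ that make \Los's theorem available in the two (possibly ill-founded) ultrapowers and hence make the verification go through.
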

\begin{proof}
  Let $r\colon\kappa\to\kappa$ be a function representing $\kappa$ in $\Ult(V,U)$. By the previous corollary and \autoref{thm:Spector}, $D$-$\AC_\kappa$ holds. By \autoref{thm:Spector}, $j_D\colon V\to\Ult(V,D)$ is elementary. Define $k\colon\Ult(V,D)\to\Ult(V,U)$ by $k([f]_D)=[f\circ r]_U$.

  To see that $k$ is indeed elementary, note that
  \begin{align*}\Ult(V,D)\models\varphi([f]_D)&\iff\{\alpha<\kappa\mid\varphi(f(\alpha))\}\in D\\
                                              &\iff\{\alpha<\kappa\mid\varphi(f(r(\alpha)))\}\in U\\
                                              &\iff\Ult(V,U)\models\varphi([f\circ r]_U)\\
                                              &\iff\Ult(V,U)\models\varphi(k([f]_D)).
  \end{align*}
  A standard verification shows that $k([f]_D)=j_U(f)(\kappa)$, and so $k$ is indeed the factor embedding.
\end{proof}

\section{Main theorems}\label{sec:main}
Both our theorems are proved with the same construction, and unless stated otherwise, ground models will satisfy $\ZFC+\GCH$.
\subsection{No ultrapower maps are elementary}
\begin{theorem}\label{thm:no-up}
  If $\kappa$ is a measurable cardinal, then there is a symmetric extension in which $\kappa$ remains a critical cardinal, but no ultrapower map is elementary. In other words, $U$-$\AC_\kappa$ fails for any measure $U$ on $\kappa$.
\end{theorem}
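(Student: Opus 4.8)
The plan is to add, for each $\alpha<\kappa$, a ``block'' $A_\alpha$ consisting of $\aleph_{\alpha+1}$ mutually generic Cohen subsets of $\aleph_{\alpha+1}$, to permute the copies inside each block, and to symmetrise with finite supports; working with subsets of cardinals that climb up to $\kappa$ (rather than with reals) is exactly what will make the tail of $j(\PP)$ closed enough to carry an $M$-generic filter back in $V$. Fix $j\colon V\to M=\Ult(V,D)$ for a normal measure $D$, so that $M^\kappa\subseteq M$, set $\QQ_\alpha=\Add(\aleph_{\alpha+1},\aleph_{\alpha+1})$, and let $\PP$ be the Easton-support product $\prod_{\alpha<\kappa}\QQ_\alpha$, which preserves cardinals and $\GCH$ by Easton's theorem. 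Let $\sG$ be the group of finitely supported sequences $\tup{\sigma_\alpha\mid\alpha<\kappa}$, where each $\sigma_\alpha$ is a permutation of $\aleph_{\alpha+1}$ acting on $\QQ_\alpha$ by permuting the $\aleph_{\alpha+1}$ Cohen ``columns'', and let $\sF$ be the normal filter generated by the pointwise stabilisers $\fix(E)$ of finite sets $E$ of columns. With $\cS=\tup{\PP,\sG,\sF}$ and $\bar V$ its symmetric extension, and writing $A_\alpha$ for the (unordered) set of columns in block $\alpha$, the canonical name of $\tup{A_\alpha\mid\alpha<\kappa}$ is fixed by all of $\sG$, so this sequence lies in $\bar V$.

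The first step is that in $\bar V$ no infinite subfamily of $\tup{A_\alpha\mid\alpha<\kappa}$ has a choice function. This is the standard permutation argument. Suppose $\dot F\in\HS$ names a choice function on $\{A_\alpha\mid\alpha\in J\}$ with $\fix(E)\subseteq\sym(\dot F)$ for a finite $E$; since $J$ is infinite, pick $\alpha\in J$ none of whose columns occurs in $E$, and a condition $p$ forcing $\dot F(\check A_\alpha)=\dot a_{\alpha,\xi}$ for some column $\xi$. As $p(\alpha)$ has size $<\aleph_{\alpha+1}$, there is a column $\eta\ne\xi$ of block $\alpha$ not mentioned by $p(\alpha)$; the transposition $\pi$ of columns $\xi$ and $\eta$ in block $\alpha$ then lies in $\fix(E)$, fixes $\dot F$ and $\check A_\alpha$, sends $\dot a_{\alpha,\xi}$ to $\dot a_{\alpha,\eta}$, and $\pi p$ is compatible with $p$. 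A common extension forces $\dot a_{\alpha,\xi}=\dot F(\check A_\alpha)=\dot a_{\alpha,\eta}$, contradicting the fact that it can be further extended to separate the two columns. Since every free ultrafilter on $\kappa$ consists of infinite sets, this shows $U$-$\AC_\kappa$ fails in $\bar V$ for every free ultrafilter $U$ on $\kappa$ — a fortiori for every measure — and by Spector's theorem this is precisely the failure of elementarity of the corresponding ultrapower maps.

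The second step is to verify, via \autoref{thm:lifting-local}, that $\kappa$ is still critical in $\bar V$. Since $j\restriction V_\kappa=\mathrm{id}$, a finitely supported $\sigma\in\sG$ is fixed by $j$ on coordinates below $\kappa$ and is trivial above $\kappa$, so $j(\sigma)=\tup{\sigma,\mathrm{id}}$ under the natural isomorphism $j(\sG)\cong\sG\times\sH$, which gives condition (2); and $j$ sends each generator $\fix(E)$ of $\sF$ to $\fix(E)$, which gives condition (3). For condition (1): as $\aleph_{\alpha+1}$ is $j$-fixed for $\alpha<\kappa$ we have $j(\QQ_\alpha)=\QQ_\alpha$ there, so splitting the Easton support of a condition of $j(\PP)$ at $\kappa$ yields $j(\PP)\cong\PP\times\RR$ with $j(p)=\tup{p,\1}$ and $\RR=\prod_{\kappa\le\alpha<j(\kappa)}\QQ_\alpha$, as computed in $M$. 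Each factor of $\RR$ is $\Add(\aleph_{\alpha+1},\aleph_{\alpha+1})$ with $\aleph_{\alpha+1}\ge\kappa^+$, so $\RR$ is $\kappa^+$-closed in $M$, hence in $V$ as well, since every descending sequence from $\RR$ of length $<\kappa^+$ lies in $M$ by $M^\kappa\subseteq M$. Finally, under $\GCH$ the number of dense subsets of $\RR$ belonging to $M$ is at most $(2^{j(\kappa)})^M=j(\kappa^+)$, an ordinal of $V$-cardinality $\kappa^+$, so a diagonalisation of length $\kappa^+$ — taking lower bounds at limit stages by $\kappa^+$-closure — builds an $M$-generic filter for $\RR$ inside $V$. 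By \autoref{thm:lifting-local}, $j$ now lifts amenably to an elementary embedding of $\bar V$ into the symmetric extension of $M$ by $j(\cS)$, and, just as in \cite{KaragilaHayut:Critical}, the restriction of the lift to $(V_{\kappa+1})^{\bar V}$ is an elementary embedding with critical point $\kappa$ into a transitive set of $\bar V$; hence $\kappa$ remains critical. Together with the first step, this proves the theorem.

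The main obstacle is precisely condition (1) of the lifting theorem — the demand that $\RR=j(\PP)/\PP$ admit an $M$-generic filter inside $V$. This is why one cannot simply symmetrise $\kappa$ copies of Cohen forcing: $M$ and $V$ have the same reals, so no $M$-generic Cohen real exists in $V$ at all. The remedy is to let the blocks add subsets of the cardinals $\aleph_{\alpha+1}\nearrow\kappa$, making the tail $\RR$ a $\kappa^+$-closed forcing with only $\kappa^+$ dense sets in $M$, while keeping the blocks wide enough — $\aleph_{\alpha+1}$ columns against conditions of size $<\aleph_{\alpha+1}$ — for the permutation argument of the first step to go through. The remaining verifications (cardinal preservation for the Easton product, and that the target of the lifted embedding really is a set of $\bar V$) are routine in this setting.
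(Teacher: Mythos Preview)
Your proof is correct and follows essentially the same strategy as the paper: an Easton-support product of Cohen forcings at cardinals climbing to $\kappa$, finite-support symmetrisation, the standard permutation argument to kill all choice functions on unbounded subfamilies, and an appeal to the simplified lifting theorem using $\kappa^+$-closure of the tail $\RR$ together with a dense-set count to build an $M$-generic in $V$. The only difference is in the indexing: the paper forces at the inaccessible non-Mahlo $\alpha<\kappa$ with $\Add(\alpha,\alpha)$, so that $\kappa\notin j(A)$ (being Mahlo in $M$) and the tail begins strictly above $\kappa$; you instead force at every $\alpha<\kappa$ with $\Add(\aleph_{\alpha+1},\aleph_{\alpha+1})$, so that the $\kappa$th coordinate of $j(\PP)$ is already $\Add(\kappa^+,\kappa^+)$ --- a slightly different device achieving the same $\kappa^+$-closure of $\RR$.
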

\begin{proof}
  In $V$, let $A\subseteq\kappa$ be the set of inaccessible non-Mahlo cardinals. We will add to each $\alpha\in A$ some Cohen subsets collected into $C_\alpha$, so that there is no way to choose one such subset for each $\alpha$ for any unbounded subset of $A$. This, in turn, will mean that the family $\{C_\alpha\mid\alpha<\kappa\}$ does not have any $\kappa$-sized subfamily admitting a choice function, so $U$-$\AC_\kappa$ fails for any potential measure $U$. Therefore, by Spector's theorem, the ultrapower map is not elementary.

  For each $\alpha\in A$, we define a symmetric system $\cS_\alpha=\tup{\QQ_\alpha,\sG_\alpha,\sF_\alpha}$. We let $\QQ_\alpha=\Add(\alpha,\alpha)$ be the forcing with conditions $p\colon\alpha\times\alpha\to 2$ such that $|p|<\alpha$, $\sG_\alpha$ be the group of all permutations of $\alpha$ moving fewer than $\alpha$ many points with the the action $\pi p(\pi\beta,\gamma)=p(\beta,\gamma)$, and $\sF_\alpha$ be the filter generated by fixing pointwise finite subsets of $\alpha$.

  Our symmetric system is $\cS=\tup{\PP,\sG,\sF}$, where $\PP$ is the Easton support product of the $\QQ_\alpha$, $\sG$ is the Easton support product of $\sG_\alpha$ acting pointwise on $\PP$, and $\sF$ is generated by groups of the form $\vec H=\tup{H_\alpha\mid\alpha\in A}$, where $H_\alpha\in\sF_\alpha$ and only finitely many coordinates satisfy $H_\alpha\neq\sG_\alpha$.\footnote{One might wonder why we are using the Easton support product for $\PP$ and $\sG$, but not $\sF$. The truth is that it would not make a difference. If one understands symmetric systems by their tenacious conditions (i.e., those which are fixed in place by some large group), then this symmetric system is the same as the finite support product, and indeed the finite support conditions and permutations at each $\alpha$ as well. (See \cite[Appendix~A]{Karagila:2019} and \cite{KS:Sym} for full details and context.) However, using Easton support makes it easier to justify the existence of an $M$-generic filter.}

  We claim that $\cS$ satisfies the conditions of \autoref{thm:lifting-local}, for any $j\colon V\to M$ for which $\crit(j)=\kappa$ and $M^\kappa\subseteq M$, e.g., any ultrapower embedding. Let us verify each condition.
  \begin{enumerate}
  \item Note that $j(\PP)\cong\PP\times\RR$ with $j(p)=\tup{p,\1_\RR}$, where $\RR$ is the Easton support product of $\Add(\alpha,\alpha)$ for $\alpha\in j(A)\setminus\kappa$.
    Since $\kappa$ is Mahlo in $M$ and $\kappa\notin j(A)$, $M\models``\RR$ is $\kappa^+$-closed''. Therefore, we can construct in $V$ an $M$-generic filter for $\RR$.
  \item Note that $j(\sG)\cong\sG\times\sH$, where $\sH$ is the Easton support product of the $\sG_\alpha$, now permutations of $\alpha\in j(A)\setminus\kappa$. Moreover, since if $\vec\pi\in\sG$ is any permutation, it has an Easton support and so it is bounded below $\kappa$. So $j(\vec\pi)$ has same support. Therefore, $j(\vec\pi)=\tup{\vec\pi,\vec\id}$, which fulfils (2) in our simplified lifting theorem.
  \item Note that $j(\sF)$ is generated by the finite support product of the filters, so with the previous condition, if $\vec H\in j(\sF)$ is a generator of the filter, then it has a finite support, and so $\vec H\restriction\kappa\in\sF$. Therefore, $j``\vec H\restriction\kappa\subseteq\vec H$.
  \end{enumerate}

  We let $W$ and $N$ denote the symmetric extensions of $V$ and $M$ respectively defined from $G$ and $j(G)$, the generic filters. As the conditions for \autoref{thm:lifting-local} hold, $j$ lifts to $j\colon W\to N$ in an amenable manner. Let us see that no embedding is an ultrapower embedding, as we suggested earlier.

  For a fixed $\alpha\in A$, let $\dot c_{\alpha,\xi}$ denote the $\QQ_\alpha$-name for the $\xi$th Cohen subset. By the way we defined $\cS_\alpha$, if $\pi\in\sG_\alpha$, then $\pi\dot c_{\alpha,\xi}=\dot c_{\alpha,\pi\xi}$. Therefore each $\dot c_{\alpha,\xi}\in\HS_{\sF_\alpha}$, and so $\dot C_\alpha=\{\dot c_{\alpha,\xi}\mid\xi<\alpha\}^\bullet\in\HS_{\sF_\alpha}$. Going back to the context of $\PP$, it is not hard to see that the name $\dot\cC=\tup{\dot C_\alpha\mid\alpha\in A}^\bullet$ is in $\HS$, since the permutations in $\sG$ must preserve it pointwise.

  We argue that if $\dot B\in\HS$ and $p\forces^\HS\sup\dot B=\check\kappa$, then $p\forces^\HS\prod_{\alpha\in\dot B}\dot C_\alpha=\check\varnothing$. To see this, if this is not the case, then there is some $\dot f\in\HS$ and $q\leq p$ such that $q$ forces $\dot f$ to be in that product. Let $E$ be a finite set which is a support for $\dot f$, namely, if $\pi\restriction E=\vec\id$, then $\pi\dot f=\dot f$. Let $\alpha>\sup E$ be large enough such that $q\forces\dot f(\check\alpha)=\dot c_{\alpha,\xi}$ for some $\xi<\alpha$, if no such $\alpha$ and $\xi$ exist, we can extend $q$ as necessary. Let $\zeta<\alpha$ be different from $\xi$ such that $\zeta\notin\supp q(\alpha)\in\Add(\alpha,\alpha)$. Let $\pi_\alpha$ be the $2$-cycle $(\xi\ \zeta)$ and let $\pi_\beta=\id$ for $\beta\neq\alpha$. Then $\vec\pi\in\sG$, and moreover, $\vec\pi\dot f=\dot f$. It is not hard to check that $\vec\pi q$ is compatible with $q$, which is impossible, since this implies that $q\cup\vec\pi q\forces^\HS\dot f(\check\alpha)=\dot c_{\alpha,\xi}\neq\dot c_{\alpha,\zeta}=\dot f(\check\alpha)$.

  Therefore in $W$, the family $\cC$ does not admit any subset of size $\kappa$ which has a choice function. In particular, there is no measure on $\kappa$ for which $U$-$\AC_\kappa$ holds, and so by \autoref{thm:Spector} there are no ultrapower embeddings with critical point $\kappa$.
\end{proof}
\begin{remark}
  Suppose $\lambda<\kappa$ is regular and we replace ``finite support'' by $\lambda$-support, while restricting $\PP$ to the cardinals above $\lambda$. Then $\PP$ is $\lambda$-closed, $\sF$ will be $\lambda$-complete, so by \cite{Karagila:DC}, $W\models\DC_{<\lambda}$. This shows that $\AC_\omega$ is consistent with the negation of $U$-$\AC_\kappa$ (assuming the consistency of a measurable cardinal).
\end{remark}
\subsection{No closed target models}
\begin{theorem}\label{thm:no-closed-targets}
  If $\kappa$ is a measurable cardinal, then there is a symmetric extension in which $\kappa$ remains a critical cardinal, but no embedding with critical point $\kappa$ has a countably closed target model.
\end{theorem}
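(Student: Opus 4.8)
The plan is to imitate the construction of \autoref{thm:no-up}, but to lift an embedding whose target is \emph{not} countably closed, so that the ambient symmetric extension inherits this defect. Begin with $\ZFC+\GCH$ and $\kappa$ measurable, fix a normal measure, and let $j=j_{0\omega}\colon V\to M$ be the $\omega$-th iterated ultrapower: then $\crit(j)=\kappa$, but the critical sequence $\tup{\kappa_n\mid n<\omega}$ — with $\kappa_0=\kappa$ and $\kappa_{n+1}$ the image of $\kappa_n$ under the $n$-th step — is not a member of $M$. Take $\cS=\tup{\PP,\sG,\sF}$ exactly as in \autoref{thm:no-up}, the Easton-support product of the $\Add(\alpha,\alpha)$ over the inaccessible non-Mahlo $\alpha<\kappa$ with finitary-permutation symmetry, so that $j(\PP)\cong\PP\times\RR$ with $\RR$ an Easton product of highly closed forcings in $M$, $j(\sG)\cong\sG\times\sH$, and $j(\sF)$ generated by a product. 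The verifications of the hypotheses of the lifting theorem run as in \autoref{thm:no-up}; the one genuinely new point is the construction, in $V$, of an $M$-generic filter for $\RR$, which in \autoref{thm:no-up} leaned on $M^\kappa\subseteq M$ and must here be done more carefully — but $\RR$ is $<\mu$-closed in $M$ for some inaccessible $\mu>\kappa$, every partial condition one assembles is automatically an element of $\RR$ and hence of $M$, and the usual bookkeeping over the inaccessibles still produces such a filter. Thus $\kappa$ remains critical in the symmetric extension $W$; write $N$ for the symmetric extension of $M$ by $j(\cS)$, the natural target of the lifted embedding.

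The first payoff is that $N$ is not countably closed. The critical sequence $\vec\kappa=\tup{\kappa_n\mid n<\omega}$ lies in $V\subseteq W$ and each $\kappa_n$, being an ordinal, lies in $N$, yet $\vec\kappa\notin M$, and neither $\PP$ (which is $\sigma$-closed) nor $\RR$ (which is $<\mu$-closed in $M$) adds a new countable sequence of ordinals, so $\vec\kappa\notin M[j(G)]$ and hence $\vec\kappa\notin N$. So $\vec\kappa$ is a map from $\omega$ into $N$ which lies in $W$ but is not a member of $N$.

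The hard part — and the main obstacle — is to upgrade this to the claim that \emph{no} target is countably closed: for every transitive $Q$ and elementary $i\colon V_{\kappa+1}^W\to Q$ with $\crit(i)=\kappa$ one must exhibit an $\omega$-sequence into $Q$ that lies in $W$ but not in $Q$. Merely decorating a partition of $A$ with the family $\cC$ does not suffice, because the naive candidates turn out to lie in $N$ already: the closure of $M$ under countable sequences, together with the fact that an $\omega$-sequence of subsets of $\kappa$ is coded by a subset of $\kappa$ and so appears in $M[G]$, forces any such sequence back into the target. The intended uniform fix is iteration: inside $W$ every such $i$ can be iterated $\omega$ times — legitimately in $\ZF$ even when the iterates are ill-founded, by Spector's theorem, since $i$ is merely an elementary embedding of $V_{\kappa+1}^W$ — yielding $i_{0\omega}\colon V_{\kappa+1}^W\to Q_\omega$ together with the commuting factor maps, whose critical sequence $\tup{\kappa,i_{01}(\kappa),i_{02}(\kappa),\dots}$ lies in $W$ but is forced out of $Q_\omega$, and then a short argument should push this down to $Q$ itself. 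Carrying this out — in particular handling ill-founded $Q$, and checking the $M$-generic for $\RR$ when $M$ fails to be $\kappa$-closed — is where the remaining work lies; should the iteration route stall, a fallback is to tune $\sG$ and $\sF$ so that $W$ already violates $\AC_\omega$ in a form that survives into no elementary image of $V_{\kappa+1}^W$, as foreshadowed by the Remark after \autoref{thm:no-up}.
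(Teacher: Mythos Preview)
Your approach has a genuine gap at the ``push down'' step, and the detour through the $\omega$-th iterate is both unnecessary and unhelpful. The theorem asks that \emph{every} target $Q$ of an embedding $i\colon V_{\kappa+1}^W\to Q$ fail to be countably closed, so exhibiting one non-closed target (the lifted $\omega$-iterate) is beside the point. Your proposed fix---iterate an arbitrary $i$ and use the resulting critical sequence---does not work for two reasons. First, in this particular $W$ no ultrapower (hence no extender) embedding is elementary, by \autoref{thm:no-up}, so there is no obvious mechanism for iterating an arbitrary $i$ at all. Second, even granting an iteration, the terms $i_{0n}(\kappa)$ for $n\geq 2$ live in $Q_n$, not in $Q$; the critical sequence is therefore not an $\omega$-sequence \emph{into} $Q$, and its absence from $Q_\omega$ says nothing about the closure of $Q$. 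There is no ``short argument'' that pushes this down.

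The paper's proof bypasses all of this by using $\cC$ itself, uniformly in $j$. Given any $j\colon V_{\kappa+1}^W\to N$, pick $\alpha\in j(A)$ with $\alpha>\kappa^+$ and consider $j(\cC)_\alpha\in N\subseteq W$. By elementarity $N$ believes $j(\cC)_\alpha$ is infinite and Dedekind-finite, since each $C_\beta$ is so in $W$. On the other hand $j(\cC)_\alpha$ lies in $W$, so it has a name in $\HS$ with a finite support; the paper argues this name has cardinality at least $\kappa^+$ in $V$, whence one can extract a $\kappa^+$-sized subname with the same stabiliser and thus a well-ordering in $\HS$. So $W$ sees an injection of $\kappa^+$ (in particular of $\omega$) into $j(\cC)_\alpha$, and no such injection can lie in $N$. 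The key idea you are missing is that the very sequence $\cC$ which destroys $U$-$\AC_\kappa$ also, through its image $j(\cC)$, furnishes a uniform witness to non-closure of every target---no special ground-model embedding and no iteration are required.
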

\begin{proof}
  The construction is the same as in \autoref{thm:no-up}, so we will retain the notation from its proof. Suppose that $j\colon V_{\kappa+1}^W\to N$ is an elementary embedding with critical point $\kappa$.

  Since $j(\cC)\in W$ is a sequence of length $j(\kappa)$, and $j(\kappa)>\kappa^+$, we have that for some $\alpha>\kappa^+$, $j(\cC)_\alpha$ is not empty. In $V$, let $\dot C_\alpha$ be a name for $j(\cC)_\alpha$ which is in $\HS$. Since $V\models|\dot C_\alpha|\geq\kappa^+$, we can find a subset of $\dot C_\alpha$ which is of size $\kappa^+$ and has the same stabiliser. But this means that the name for a well-order of this subset will be stabilised by the same finite set, and therefore $W\models\kappa^+\leq|C_\alpha|$. However, standard arguments show that in $N$, $j(\cC)_\alpha$ is Dedekind-finite, but not so in $W$. So in particular, $N$ is not countably closed in $W$.
\end{proof}

As with the case of the ultrapower maps being non-elementary, this construction lends itself to easy generalisations when replacing ``finite support'' by $\lambda$-support, for some regular $\lambda<\kappa$, to obtain that no target model is $\lambda$-closed while $\DC_{<\lambda}$ holds. But even more broadly, if $j(\PP)=\PP\times\RR$ (with $\RR$ non-trivial) and the conditions for \autoref{thm:lifting-local} hold, then if the symmetric extension given by $j(\cS)$ is not $\lambda$-closed in $M[H]$, the lifted embeddings will not be $\lambda$-closed either. The reason, of course, is simple: $H$ is in $W$, so $M[H]$ is a class of $W$, and the failure of closure is accessible to $W$. This is easily the case when adding Dedekind-finite sets, as above.

Interestingly, however, the requirement that $\RR$ is non-trivial is essential. In cases such as Cohen's first model, where $j(\PP)=\PP$, and we simply rely on the Levy--Solovay phenomenon to lift the embedding, the embeddings remain closed: Any Dedekind-finite set in $N$ is going to be Dedekind-finite in $W$, and so the counterexamples to the closure of $N$ are simply not inside $W$.
\section{Questions}
\begin{question}
  Can a critical cardinal admit an ultrapower embedding, but not a closed one?
\end{question}
To try and attack the above question, we can consider the following local symmetric systems: For an inaccessible cardinal $\alpha<\kappa$, consider $\Add(\alpha^+,\omega\times\alpha^+)$. This defines a natural partition of the new subsets into countably many sets. Take the wreath product $\{\id\}\wr S_{<\alpha^+}$ as our automorphism group, and the filter generated by $\fix(n\times\alpha^+)$, fixing pointwise the first $n$ cells of the partition. This system adds a countable sequence of well-orderable sets which does not admit a choice function. We can now take the product of these symmetric systems.
\begin{enumerate}
\item  In the full support product of these symmetric systems, the conditions of the lifting theorems (neither the simplified one, nor \cite[Theorem~2.13]{KaragilaHayut:Small}) no longer hold: While $j(\PP)\cong\PP\times\RR$, it is not true that $j(p)=\tup{p,\1_\RR}$, for example. Even if an embedding lifts, it is not clear what a counterexample to the closure of the target model will look like, nor if $U$-$\AC_\kappa$ holds for any measure.
\item If we take an Easton support product (of all three components, including the filters), we can find the needed $M$-generic, as we did in the theorems presented in this paper. However, it is not at all clear what sort of argument can be used to prove that $U$-$\AC_\kappa$ holds. Indeed, it is not clear that it does hold in the extension.
\end{enumerate}
It might be the case, however, that stronger assumptions (e.g., $o(\kappa)>1$) are needed to allow for a more refined symmetric system. This lends itself to the following question.

\begin{question}
  Is the consistency strength of ``$\kappa$ admits ultrapower embeddings, but no closed ultrapower embeddings'' higher than a single measurable cardinal?
\end{question}

Independently of the two questions above, the following are two important questions.
\begin{question}
  Is there a lifting criterion which provides us with an easy-to-use control over the closure of the target models?
\end{question}
\begin{question}
  When taking a symmetric extension over a model with a measurable cardinal, are there new embeddings which are not generated by ground model embeddings? Can we find a condition which allows us to control these?
\end{question}
If there is a reasonably usable criterion which allows us to control the closure of the lifted embeddings, one can conceivably concoct, starting from a single supercompact cardinal, a class-sized product or iteration in which enough supercompact embeddings lift and retain their closure. This will provide an answer to Question~3.11 in \cite{KaragilaHayut:Critical}, asking for a model where a supercompact cardinal exists and the Axiom of Choice fails in a non-trivial way (that is, not a Levy--Solovay phenomenon).

\subsection*{Acknowledgements}
The authors would like to thank the anonymous referee for suggesting improvements to the presentation of the paper.
\newpage
\providecommand{\bysame}{\leavevmode\hbox to3em{\hrulefill}\thinspace}
\providecommand{\MR}{\relax\ifhmode\unskip\space\fi MR }
\providecommand{\MRhref}[2]{%
  \href{http://www.ams.org/mathscinet-getitem?mr=#1}{#2}
}
\providecommand{\href}[2]{#2}


\begin{thebibliography}{10}

\bibitem{KaragilaHayut:Critical}
Yair Hayut and Asaf Karagila, \emph{Critical cardinals}, Israel J. Math.
  \textbf{236} (2020), no.~1, 449--472.

\bibitem{KaragilaHayut:Small}
\bysame, \emph{Small measurable cardinals}, Arch. Math. Logic \textbf{65}
  (2026), no.~4, 465--475.

\bibitem{Jech:1968}
T.~Jech, \emph{{$\omega \sb{1}$} can be measurable}, Israel J. Math. \textbf{6}
  (1968), 363--367 (1969).

\bibitem{Jech:ST2003}
\bysame, \emph{Set {T}heory}, Springer Monographs in Mathematics,
  Springer-Verlag, Berlin, 2003, The third millennium edition, revised and
  expanded.

\bibitem{Jech:AC}
T.~J. Jech, \emph{The {A}xiom of {C}hoice}, North-Holland Publishing Co.,
  Amsterdam-London; Amercan Elsevier Publishing Co., Inc., New York, 1973,
  Studies in Logic and the Foundations of Mathematics, Vol. 75.

\bibitem{Karagila:2019}
Asaf Karagila, \emph{Iterating symmetric extensions}, J. Symb. Log. \textbf{84}
  (2019), no.~1, 123--159.

\bibitem{Karagila:DC}
\bysame, \emph{Preserving dependent choice}, Bull. Pol. Acad. Sci. Math.
  \textbf{67} (2019), no.~1, 19--29.

\bibitem{KS:KWP}
Asaf Karagila and Jonathan Schilhan, \emph{Intermediate models and
  {K}inna--{W}agner {P}rinciples}, Proc. Amer. Math. Soc. \textbf{154} (2026),
  no.~1, 393--403. \MR{5002095}

\bibitem{KS:Sym}
\bysame, \emph{Towards a theory of symmetric extensions}, arXiv
  \textbf{2602.17338} (2026), 1--39, Submitted.

\bibitem{Spector:1988}
Mitchell Spector, \emph{Ultrapowers without the axiom of choice}, J. Symbolic
  Logic \textbf{53} (1988), no.~4, 1208--1219.

\bibitem{Usuba:LS}
T.~Usuba, \emph{Choiceless {L}{\"o}wenheim--{S}kolem property and uniform
  definability of grounds}, Advances in Mathematical Logic (Singapore),
  Springer Nature Singapore, 2021, pp.~161--179.

\bibitem{Usuba:GeologySym}
Toshimichi Usuba, \emph{Geology of symmetric grounds}, Mathematical Logic,
  Computability, Complexity, and Randomness, World Scientific, 2025,
  pp.~81--99.

\end{thebibliography}
\end{document}